\newcommand{\nobracket}{}
\newcommand{\tmmathbf}[1]{\ensuremath{\boldsymbol{#1}}}
\newcommand{\tmop}[1]{\ensuremath{\operatorname{#1}}}
\newcommand{\tmtextbf}[1]{\text{{\bfseries{#1}}}}
\newcommand{\tmtextit}[1]{\text{{\itshape{#1}}}}
\newenvironment{proof}{\noindent\textbf{Proof\ }}{\hspace*{\fill}$\Box$\medskip}
{\theorembodyfont{\rmfamily}\newtheorem{example}{Example}}
\newtheorem{theorem}{Theorem}
\newtheorem{claim}{Claim}
\newtheorem{definition}{Definition}
\newtheorem{remark}{Remark}
\newtheorem{assumption}{Assumption}
\newcommand{\dd}{\mathrm d}
\author{
Maximilian Engel\thanks{
Freie Universit\"at Berlin, Institut f\"ur Mathematik und Informatik, 14195 Berlin, Germany and
University of Amsterdam, KdV Institute for Mathetmatics, 1098 XG
Amsterdam, Netherlands, 
email: m.r.engel@uva.nl.
} \quad 
Peter K.\ Friz\thanks{
Technische Universit\"at Berlin and 
Weierstrass Institut (WIAS), Berlin, Germany, 
email: friz@math.tu-berlin.de.
} \quad 
Tal Orenshtein\thanks{
Università degli Studi di Milano-Bicocca, Dipartimento di Matematica e Applicazioni, Via Cozzi 55, 20125 Milano, Italy, 
email: tal.orenshtein@unimib.it.
}
}
\begin{document}

\onehalfspacing

\title{Nonlinear effects within invariance principles}

\maketitle

\begin{abstract}
The combination of functional limit theorems with the pathwise analysis of deterministic and stochastic differential equations has proven to be a powerful approach to the analysis of fast-slow systems. In a multivariate setting, this requires rough path ideas, as already suggested in the seminal work [Melbourne-Stuart, Nonlinearity, 24, 2011]. 
This initiated a program pursued by numerous authors and which has required substantial results on invariance principles (also known as functional central limit theorems) in rough path topologies. We take a unified point of view and provide simple and exact formulas, of the Green-Kubo type, that characterize the relevant Brownian rough path limits and discuss how they naturally apply in different settings.
\end{abstract}

\section{Introduction}

Random walks are stochastic processes in which a particle undergoes successive, independent, and identically distributed steps in various directions. When these steps are small and frequent, the cumulative effect over time resembles the continuous and erratic motion observed in Brownian motion in rather general settings. The (weak) convergence of random walks whose jumps have finite variance and vanishing mean, to Brownian motion, also known as the functional central limit theorem, is formalized by Donsker's invariance principle. This principle is well understood, and in particular, the conclusion remains valid even when the assumption of independent steps is relaxed to suitable mixing conditions that quantify correlation. 
Classical references include 
Hall-Heyde 1980 \cite{hall1980martingale} and 
Theorem 1 in Doukhan et al 1994 \cite{doukhan1994functional}.

%
%
%

From Donsker's perspective, no matter if the random walk (rescaled) is a piecewise constant process or a piecewise linear process - the resulting (weak) limit, Brownian motion, remains identical. However, if employed as a driving noise of a dynamical system this is no longer the case, and hence, from a modelling point of view, it requires (or permits) a choice: 
The first one, common in financial mathematics, leads to stochastic recursions that converge weakly to It\^{o} stochastic differential equations (ISDEs).\footnote{Within stochastic analysis, the so-called UT/UCV theory will cover this situation nicely.} The second one, commonly used by physicists and geometers, leads to random ODEs with weak ``Wong-Zakai'' limits described by Stratonovich stochastic differential equations (SSDEs).

\subsection*{Related work on limit SDEs}

Classical stochastic analysis treatises such as Ikeda-Watanabe \cite{ikeda1981stochastic} consider SSDEs as drift-perturbed ISDEs; a geometric ``Wong-Zakai'' approach is advocated by Stroock
\cite{stroock2003markov}. The latter has been given a profound understanding through Lyons' theory of rough paths which reduces the Wong-Zakai results to establishing 
convergence of piecewise linear approximations at the level of process and L\'evy area; an application to support theory was discussed in \cite{friz2006levy}.

Interestingly, other approximations, like those induced by considering charged Brownian particles in a magnetic field \cite{friz2015physical} lead to limit SDEs which are in general of neither It\^{o} nor Stratonovich type.
In the (somewhat restricted) case of exact coefficient fields (think: additive noise or diffeomorphic transformations thereof), the same structure appears for an intriguing class of fast processes - that play the role of our rescaled random walks. This was explicitly pointed out by Melbourne and Stuart \cite{melbourne2011note}. 

In a celebrated series of works, started with \cite{kelly2016smooth} and  \cite{kelly2017deterministic}, exactness was removed, thanks to rough path theory. In later joint work with one of the authors
\cite{chevyrev2022deterministic}, we obtained validity of these results under optimal moment assumptions. We also draw attention to the recent work by Gottwald and Melbourne \cite{Gottwald_2024} where the role of L\'evy area and possible corrections are discussed, fully aligned with the rough path view taken in this work.

We would also like to point to more results, in the Markovian setting, by Deuschel-Orenshtein-Perkowski \cite{deuschel2021additive} and the one in the regenerative / i.i.d-like setting, by Orenshtein \cite{orenshtein2021rough} where the focus is on the relevant rough paths results; for the non-Markovian situation and almost-sure results
see Friz-Kifer \cite{friz2021almost}.

All these cited works yield, in different settings and with explicit formulae, that the limiting dynamics are, in general, of neither It\^{o} nor Stratonovich type.

\subsection*{Main insights and the structure of the paper}

The purpose of this short note, deliberately non-technical, at least in comparison to most of the aforementioned works, is to point out rather easy second-moment computations that, for
all of the aforementioned situations, lead to the correct formula of Green-Kubo type; such formulas are familiar to people with knowledge in homogenization of stochastic processes, as discussed for instance in 
the book by Pavliotis and Stuart \cite{pavliotis2008multiscale}, which describes correctly the structure of the limit Brownian rough paths, and hence settles the limiting
stochastic dynamics of accompanying fast-slow systems.

To avoid misunderstandings, we do NOT offer to newly (re-)prove results in \cite{deuschel2021additive, friz2015physical, friz2021almost, kelly2016smooth, kelly2017deterministic, orenshtein2021rough} 
and in particular shall not give a general proof of weak convergence. Our intention is to point out that a sufficiently nice convergence, in whatever setting, leads to the same structure of the limit Brownian rough paths.

In the following, after introducing the appropriate general set-up in which our results will be formulated, we discuss iterated invariance principles for processes with stationary increments in different situations:
each section contains a main theorem followed by various relevant examples. Section 2.1 deals with the discrete time case, with results on problems of It\^o type in Section 2.1.1 and those of Wong-Zakai type in Section 2.1.2. 
Continuous time iterated invariance principles are discussed in Section 2.2: we deal with stationary continuous time processes in Section 2.2.1 and discuss continuous time suspension flows, which are built from discrete settings in Section 2.2.2. 


Finally, we remark that some of the examples in the literature are non-stationary (to be understood in the appropriate sense). Hence, a reoccurring feature of our paper is the adjustment of the various models to the stationary settings. This requires an additional argument which may be of interest beyond the problem of iterated invariance principles: see, for instance, Theorem \ref{thm:suspension} and Example \ref{example:regeneration}.

\section{Iterated invariance principles in different settings}

A Brownian rough path over $\mathbb{R}^d$ with characteristics
$(\Sigma, \Gamma) \in (\mathbb{R}^d)^{\otimes 2} \times
(\mathbb{R}^d)^{\otimes 2}$
is any process defined by $t \mapsto \tmmathbf{B}(t)
= (B(t), \mathbb{B}(t))$, where $B$ is a $d$-dimensional Brownian motion with
covariance
\[ \Sigma = \tmmathbf{E} (B (1) \otimes B (1)) \in (\mathbb{R}^d)^{\otimes 2}
\]
and $\mathbb{B}$ is of the form
\[ \mathbb{B} (t) = \int_0^t B  \otimes \dd B + t \Gamma \in
   (\mathbb{R}^d)^{\otimes 2} , \]
that is \[ \mathbb{B}^{i,j}(t) = \int_0^t B ^{i}(s) \dd B^{j}(s) + t \Gamma ^{i,j}, \; i,j=1,...,d.\]
The integral above is in the sense of It\^{o} and $\tmmathbf{E} (X)$ is defined component-wise, whenever $X$ is a random vector or a random matrix.
In particular,    
$\Gamma = \tmmathbf{E} (\mathbb{B} (1))$.
We recall some well-known strong limit theorems. The It\^{o} Brownian rough path
($\Gamma = 0$) arises from considering lifted (càdlàg) piecewise constant
approximations which is tantamount to taking  left-point Riemann-Stieltjes approximations of $\int B \otimes \dd B$. 
The Stratonovich Brownian rough path ($\Gamma = \frac{1}{2}
\Sigma$) \ arises e.g.~from piecewise approximations, corresponding with mid-point Riemann-Stieltjes approximations of $\int B \otimes \dd B$. 
All mentioned convergences are \tmtextit{pointwise}, i.e. for fixed $t$, and in $\tmmathbf{P}$-probabiliy, but can be upgraded to convergence in appropriate
rough path metrics and\tmtextbf{} $L^{\infty -} (\tmmathbf{P})$. Below we will
be interested in weak, a.k.a.~in law, limit theorems.

For generality, we shall consider the following definition.
\begin{definition}
\label{def:main}
    Let $t \mapsto (S_N(t), \mathbb{S}_N(t))$ be a random process on $\mathbb{R}^d \times (\mathbb{R}^d)^{\otimes 2}$. We say that the iterated invariance principle holds for $(S_N, \mathbb{S}_N)$ with a Brownian
  rough path limit $(B, \mathbb{B})$ over $\mathbb{R}^d$ 
if the following two conditions hold.
    \begin{enumerate}
        \item For all $t$ the $\mathbb{R}^d \times (\mathbb{R}^d)^{\otimes 2}$-valued sequence $(S_N(t), \mathbb{S}_N(t))$ converges in law to $(B(t),\mathbb{B}(t))$.  
        \item  $\tmmathbf{E}(S_N(1)\otimes S_N(1))$ converges to $\tmmathbf{E}(B(1)\otimes B(1))$ and $\tmmathbf{E}(\mathbb{S}_N(1))$ converges to $\tmmathbf{E}(\mathbb{B}(1))$. 
\end{enumerate}
\end{definition}
We refer to Ekisheva-Houdré \cite{ekisheva2006transportation} (and the references therein) which contains various conditions to the central limit theorem together with convergence of moments.   
 

\subsection{Discrete iterated invariance principles}
\subsubsection{It\^{o}-type iterated invariance principles}
Consider a $d$-dimensional discrete-time stationary process $\xi = \xi (k ;
\omega)$ with rescaled partial sums
\begin{equation} \label{eq:S_N}
    S_N (t) : = \frac{1}{N^{1 / 2}} \sum_{0 \leqslant k < [N t]} \xi (k).
\end{equation} 
Assume
$\tmmathbf{}
\tmmathbf{E} \xi (0) = 0$ and consider $S_N$ with the canonical enhancement of iterated
sums $\mathbb{S}_N$ defined by 
\begin{equation} \label{eq:mathhbb S_N}
    \mathbb{S}_N (t) : = \frac{1}{N} \sum_{0 \leqslant k < \ell < [N t]} (\xi (k)
   \otimes \xi (\ell)).
\end{equation} 

\begin{theorem}
\label{thm:Main_thm_discrete}
  Let $\xi$ be a discrete stationary process with finite second moments and
  summable correlations, that is
  \[ \Delta (0) \quad \text{ and } \quad \sum_{n =
     1}^{\infty} \Delta (n) \text{ are finite}, \]
     where $\Delta (n) : = \tmmathbf{E} (\xi (0) \otimes \xi (n))$ for $n=0,1,...$.    
  Consider the rescaled piecewise constant approximations $S_N$ \eqref{eq:S_N} with the iterated sum process $\mathbb{S}_N$ \eqref{eq:mathhbb S_N}. If the iterated invariance principle holds for $(S_N,\mathbb{S}_N)$, then the Brownian
  rough path limit $(B, \mathbb{B})$ is necessarily with characteristics
  \[ \Sigma = \Delta (0) + 2 \tmop{Sym} (\Gamma) \quad \text{ and } \quad \Gamma = \sum_{n = 1}^{\infty} \Delta (n),\]
where $\tmop{Sym}(\Gamma)$ denotes the symmetric part of the matrix $\Gamma$.
\end{theorem}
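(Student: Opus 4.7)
The proof is a direct computation: condition (2) of Definition \ref{def:main} identifies $\Sigma$ and $\Gamma$ as the limits of $\mathbf{E}(S_N(1)\otimes S_N(1))$ and $\mathbf{E}(\mathbb{S}_N(1))$ respectively, so the plan is just to evaluate these two second moments in closed form using stationarity and to pass to the limit using summability of correlations.

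I would start with the iterated sum, where the book-keeping is cleanest. By stationarity, $\mathbf{E}(\xi(k)\otimes\xi(\ell)) = \Delta(\ell-k)$ for $k<\ell$, so grouping the double sum in \eqref{eq:mathhbb S_N} by the gap $n = \ell-k \geq 1$ and counting multiplicities gives
$$\mathbf{E}(\mathbb{S}_N(1)) = \sum_{n=1}^{N-1}\left(1 - \tfrac{n}{N}\right)\Delta(n).$$
Absolute summability of $\{\Delta(n)\}_{n\geq 1}$ justifies dominated convergence, so the limit equals $\sum_{n=1}^\infty \Delta(n)$; equating this with $\mathbf{E}(\mathbb{B}(1)) = \Gamma$ yields the claimed formula for $\Gamma$.

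For $\mathbf{E}(S_N(1)\otimes S_N(1))$ I would split the double sum over $(k,\ell)\in\{0,\ldots,N-1\}^2$ into the diagonal $k=\ell$, which contributes $\Delta(0)$, and the off-diagonal terms $k<\ell$ and $k>\ell$, which respectively yield $\Delta(\ell-k)$ and the tensor-swap $\Delta(k-\ell)^\top$ by stationarity. The same grouping-by-gap argument then produces
$$\mathbf{E}(S_N(1)\otimes S_N(1)) = \Delta(0) + \sum_{n=1}^{N-1}\left(1-\tfrac{n}{N}\right)\bigl(\Delta(n) + \Delta(n)^\top\bigr),$$
whose limit under dominated convergence is $\Delta(0) + 2\operatorname{Sym}(\Gamma)$. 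Combined with condition (2), this identifies $\Sigma$ and completes the proof. There is no real obstacle here; the only point requiring a bit of care is the correct accounting of the $k>\ell$ terms, which produce the transposed correlations $\Delta(n)^\top$ and hence the symmetrization of $\Gamma$ in the Green--Kubo expression for $\Sigma$.
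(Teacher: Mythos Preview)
Your proof is correct and follows essentially the same route as the paper: compute $\mathbf{E}(\mathbb{S}_N(1))$ and $\mathbf{E}(S_N(1)\otimes S_N(1))$ via stationarity, group by the gap $n=\ell-k$, and pass to the limit using condition (2) of Definition~\ref{def:main}. The only cosmetic difference is that the paper rewrites your weighted sum $\sum_{n=1}^{N-1}(1-n/N)\Delta(n)$ as the Ces\`aro average $\tfrac{1}{N}\sum_{n=1}^{N}\sigma_n$ with $\sigma_n=\sum_{k\le n}\Delta(k)$ and invokes Ces\`aro convergence instead of dominated convergence---which has the minor advantage of requiring only convergence of $\sum_n\Delta(n)$ rather than absolute summability.
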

The expression for the covariance matrix $\Sigma$ is sometimes called \emph{Green-Kubo formula}.

\begin{proof}
Let $\sigma_n := \sum_{k=1}^n \Delta (k)$. Using stationarity, we see that

\begin{align*}
    \sum_{0 \leqslant k< \ell < N-1} \tmmathbf{E} (\xi
   (k) \otimes \xi (\ell)) =
    \sum_{0 \leqslant k< \ell < N-1} \Delta (\ell-k)  
    =  \sum_{1\leq k < N} (N-k) \Delta(k) = \sum_{n=1}^N \sigma_n.
\end{align*}
The summability of the correlations $\Delta (n)$ implies that
\begin{align*}
 \lim_{N\to \infty} \frac{1}{N} \sum_{n=1}^N \sigma_n
   &= \lim_{N\to \infty} \sigma_N = \sum_{k=1}^{\infty} \Delta (k).
\end{align*}
Hence 
\begin{align*}
   \lim_{N\to \infty} \frac{1}{N} \sum_{0 \leqslant k< \ell < N-1} \tmmathbf{E} (\xi
   (k) \otimes \xi (\ell))  &=   
   \sum_{k=1}^{\infty} \Delta (k)
\end{align*}
and analogously, 
\begin{align*}
      \lim_{N\to \infty} \frac{1}{N} \sum_{0 \leqslant k< \ell < N-1} \tmmathbf{E} (\xi
   (\ell) \otimes \xi (k))  &= \sum_{k=1}^{\infty} \Delta (k)^T,
\end{align*}
where $\Delta (n)^T = \tmmathbf{E} (\xi (n) \otimes \xi (0))$.
Therefore
\begin{align*}
    \Sigma & \approx  \tmmathbf{E} (S_N(1)\otimes S_N(1))  = \frac{1}{N} \sum_{0 \leqslant k, \ell < N-1} \tmmathbf{E} (\xi
   (k) \otimes \xi (\ell)) \\
   &\approx \tmmathbf{E} (\xi (0) \otimes \xi (0)) +
   \sum_{n = 1}^{\infty} \tmmathbf{E} (\xi (0) \otimes \xi (n) + \xi (n)
   \otimes \xi (0)),
\end{align*}  
where here and after we use the notation $a_N \approx b_N$ if $a_N-b_N \to 0$ as $N$ tends to infinity.  

In terms of $\Delta (n)$, this becomes
$\Sigma = \Delta (0) + 2 \sum_{n = 1}^{\infty} \tmop{Sym} (\Delta (n))$. 
On the other hand, 
$\tmmathbf{E} (\mathbb{S}_N (1))  \approx \tmmathbf{E} (\mathbb{B} (1))=\Gamma$.
The computation above gives
\[ \Gamma \approx \frac{1}{N} \sum_{0 \leqslant k < \ell < N} \tmmathbf{E}
   (\xi (k) \otimes \xi (\ell)) \approx 
   \sum_{n = 1}^{\infty} \Delta (n) . \]
\end{proof}

\begin{example}
  Kelly-Melbourne \cite{kelly2016smooth, kelly2017deterministic} and (8),(19) in Chevyrev et al \cite{chevyrev2019multiscale}:
  Considering maps $T : M \rightarrow M$ on a compact manifold $M$ with invariant and ergodic measure $\mu$ on a hyperbolic invariant set $\Lambda \subset M$, Theorem~\ref{thm:Main_thm_discrete} applies with
  \[ \Delta (n) : = \tmmathbf{E} (\xi (0) \otimes \xi (n)) = \int (v \otimes
     (v \circ T^n)) \dd \mu, \]
   for a $\mu$-centered H\"older observable $v : \Lambda \rightarrow
  \mathbb{R}^d$, where the assumption of Theorem~\ref{thm:Main_thm_discrete} corresponds with summable decay of correlations, i.e.~sufficiently fast mixing. 
  The probability space in this case is thus $(\Lambda, \mu)$ and the stationary process is $\xi (n ; \omega) = (v \circ T^n) (\omega)$. Notable examples are uniformly expanding maps $T$ and non-uniformly expanding maps $T$ with appropriate Young towers.
\end{example}

\begin{example}
  Kurtz-Protter \cite{kurtz1991wong}. The $\xi$ are taken IID, centred with finite second
  moments. Then $\Gamma=0$ and one gets the expected It\^{o} limit. (The $S_N$ are
  martingales, and satisfy the UCV condition.) This already implies tightness
  in rough paths metrics, cf.\ \cite{chevyrev2019canonical} and also in cadlag rough path spaces
  \cite{friz2018differential}.
\end{example}

\begin{example}
  Theorem 2.2.\ in Friz-Kifer \cite{friz2021almost}.
  Direct mixing conditions are given. With
  \[ \Delta (n) : = \tmmathbf{E} (\xi (0) \otimes \xi (n)) \]
  an almost sure iterated invariance principle is shown, with $\Sigma,
  \Gamma$ as described by Theorem~\ref{thm:continuous_time}.
\end{example}

\subsubsection{Wong-Zakai type iterated invariance principles}

We maintain the discrete setup, but do not work with the piecewise constant
partial sums $S_N$ but instead, \`{a} la Wong-Zakai, with their piecewise-linear
counterpart $\hat{S}_N$. Write $(\hat{S}_N, \hat{\mathbb{S}}_N)$ for the
canoncial (geometric) rough path lift. More explicitly,
\begin{equation} \label{eq:hat_S_N}
     \hat{\mathbb S}_N (t) = \int_0^{t} \hat{S}_N(s) \otimes \dd \hat{S}_N(s),
\end{equation}
where the integration is in the Riemann–Stieltjes sense.

\begin{theorem}
  In the assumptions of Theorem \ref{thm:Main_thm_discrete}, if the iterated invariance principle holds for  $(\hat{S}_N, \hat{\mathbb{S}}_N)$ (defined above, see \eqref{eq:hat_S_N}), then the (Stratonovich)
  Brownian rough path limit $(B, \hat{\mathbb{B}})$ has characteristics
  $(\Sigma, \hat{\Gamma})$,
  \[ \Sigma = \Delta (0) + 2 \tmop{Sym} \left( \sum_{n = 1}^{\infty} \Delta
     (n) \right), \qquad \hat{\Gamma} = \frac{1}{2} \Delta (0) + \sum_{n =
     1}^{\infty} \Delta (n) . \]
  As a consequence,
  \[ \hat{\mathbb{B}} (1) = \int_0^1 B \otimes \circ \dd B + \tmop{Anti} \left( \sum_{n =
     1}^{\infty} \Delta (n) \right), \]
where the last integration is in the sense of Stratonovich.
\end{theorem}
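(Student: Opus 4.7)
The idea is to reduce this Wong--Zakai statement to Theorem~\ref{thm:Main_thm_discrete} via an explicit identity relating $(\hat S_N, \hat{\mathbb S}_N)$ to $(S_N, \mathbb S_N)$. First observe that the piecewise linear interpolant $\hat S_N$ agrees with $S_N$ at the grid points $k/N$; in particular $\hat S_N(1)=S_N(1)$, so the covariance characteristic
\[ \Sigma \;=\; \lim_{N\to\infty} \tmmathbf{E}(\hat S_N(1)\otimes \hat S_N(1)) \;=\; \lim_{N\to\infty} \tmmathbf{E}(S_N(1)\otimes S_N(1)) \]
is inherited verbatim from Theorem~\ref{thm:Main_thm_discrete} and equals $\Delta(0)+2\tmop{Sym}(\sum_{n\ge 1}\Delta(n))$.

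The main task is the evaluation of the area characteristic. On each subinterval $[k/N,(k+1)/N]$ the path $\hat S_N$ is affine with slope $\sqrt N\,\xi(k)$, so a direct Riemann--Stieltjes computation gives
\[ \int_{k/N}^{(k+1)/N} \hat S_N(s) \otimes \dd \hat S_N(s) \;=\; \frac{1}{\sqrt N}\,\hat S_N(k/N)\otimes \xi(k) \;+\; \frac{1}{2N}\,\xi(k)\otimes\xi(k). \]
Summing over $k=0,\dots,N-1$ and recognising $\sum_k \frac{1}{\sqrt N}\hat S_N(k/N)\otimes \xi(k)=\mathbb S_N(1)$ yields the key identity
\[ \hat{\mathbb S}_N(1) \;=\; \mathbb S_N(1) \;+\; \frac{1}{2N}\sum_{k=0}^{N-1} \xi(k)\otimes\xi(k). \]
Taking expectations, the second sum evaluates \emph{exactly} to $\tfrac12 \Delta(0)$ by stationarity, while the first one converges to $\Gamma=\sum_{n\ge 1}\Delta(n)$ by Theorem~\ref{thm:Main_thm_discrete}. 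Hence $\hat\Gamma=\tfrac12\Delta(0)+\sum_{n\ge 1}\Delta(n)$, as claimed.

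For the consequence, I would apply the tensor It\^o--Stratonovich correction $\int_0^1 B\otimes\circ\,\dd B = \int_0^1 B\otimes \dd B + \tfrac12\Sigma$, which follows componentwise from $\langle B^i,B^j\rangle_1 = \Sigma^{i,j}$. Substituting this together with the expressions for $\Sigma$ and $\hat\Gamma$ into $\hat{\mathbb B}(1)=\int_0^1 B\otimes \dd B + \hat\Gamma$, the $\Delta(0)$ contributions cancel and the symmetric part of $\sum_{n\ge 1}\Delta(n)$ is annihilated, leaving precisely $\tmop{Anti}(\sum_{n\ge 1}\Delta(n))$.

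The real work sits in the Riemann--Stieltjes computation on each affine piece, which is routine. The main risk I see is sign/convention bookkeeping in the tensor It\^o--Stratonovich correction, where an orientation error would swap $\tmop{Sym}$ and $\tmop{Anti}$ and destroy the final formula; this is the step that deserves the most care.
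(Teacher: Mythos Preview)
Your proof is correct and follows essentially the same route as the paper: both reduce to Theorem~\ref{thm:Main_thm_discrete} via the discrete It\^{o}--Stratonovich identity $\hat{\mathbb S}_N(1)=\mathbb S_N(1)+\frac{1}{2N}\sum_{k}\xi(k)\otimes\xi(k)$, take expectations to obtain $\hat\Gamma=\Gamma+\tfrac12\Delta(0)$, and then compute $\hat\Gamma-\tfrac12\Sigma=\tmop{Anti}\bigl(\sum_{n\ge1}\Delta(n)\bigr)$. Your explicit affine-piece Riemann--Stieltjes calculation is the only additional detail beyond what the paper writes, and it is carried out correctly.
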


\begin{proof}
  The covariance $\Sigma$ is not affected by the problem modification, hence
  as before. We have a discrete It\^{o}-Stratonovich correction of the form
  \[ \hat{\mathbb{S}}_N (1) =\mathbb{S}_N (1) + \frac{1}{2 N } \sum_{0
     \leqslant k < N} \xi (k) \otimes \xi (k) \quad \]
     and, hence, 
     \[ \tmmathbf{E}
     (\hat{\mathbb{S}}_N (1)) = \tmmathbf{E} (\mathbb{S}_N (1)) + \frac{1}{2}
     \Delta (0). \]
  The argument in the proof of Theorem \ref{thm:Main_thm_discrete} gives
  \[ \hat{\Gamma} = \tmmathbf{E} (\hat{\mathbb{B}} (1)) = \tmmathbf{E}
     (\mathbb{B} (1)) + \frac{1}{2} \Delta (0) = \Gamma + \frac{1}{2} \Delta
     (0) . \]
  Note that
  \[ \hat{\mathbb{B}} (1) = \int_0^1 B \otimes \dd B + \hat{\Gamma} = \left(
     \int_0^1 B \otimes \circ \dd B - \frac{1}{2} \Sigma \right) + \hat{\Gamma}
  \]
  and compute, using the previously obtained expression for $\hat{\Gamma}$ and
  $\Sigma$,
  \[ \hat{\Gamma} - \frac{1}{2} \Sigma = \tmop{Anti} \left( \sum_{n =
     1}^{\infty} \Delta (n) \right) . \]
     This finishes the proof.
\end{proof}

\begin{example}
  Breuillard-Friz-Huesmann 2009 \cite{breuillard2009random}. The $\xi$ are taken IID, so that $\Gamma=0$, and one
  gets the classical Wong-Zakai / Stratonovich limit.
\end{example}

\begin{example} \label{example:regeneration}
  Orenshtein 2021 \cite{orenshtein2021rough}, Theorem 1.5 (see also the main result of \cite{lopusanschi2021ballistic}). Hereinafter for a process $X$ and a pair of time indices $s<t$ we denote $X_{s,t}:=X_t-X_s$ its increments.   
  Here $\xi(n)=X_{n-1,n}$ for $n\in\mathbb N$, where $X_n$ is a delayed regenerative process on $\mathbb R^d$ with respect to the random regeneration times $0=\tau_0<\tau_1<\tau_2<...<\infty$ a.s. That is, the sequence of epochs
\[
\mathcal E_k := \big(
T_{k+1} , (X_{\tau_k,\tau_{k}+\ell})_{\ell=0,1,...,T_{k+1}},
\big)
\]
where $T_0=0$ and $T_{k+1}=\tau_{k+1}-\tau_k$, is independent and identically distributed for $k\ge 1$ and independent of the delay epoch $\mathcal E_0$. 

A stationary version $\bar X$ of $X$ is constructed by letting $\bar X_{V,V+n}= \tilde X_{\tau_1, \tau_1 + n}$ for all $n\ge 0$, where $V$ is distributed uniformly on $\{0,1,...,T^*-1\}$, $T^*$ is an independent size-biased version of $T_2=\tau_2-\tau_1$, 
and $(\tilde X_{n}, n=0,1,...,T^*-V])$ is an independent copy of $(X_{n}, n=0,1,...,T^*-V])$, c.f. \cite{ney1981refinement, thorisson1983coupling, thorisson1992construction}. In particular, we have that the increments of the stationary process coincide with the ones of a delay-free version of the original process after a time shift of $U=T^*-V$.

In more detail, let $T^*$ be the size-biased version of $T_2$ taken independently of $X$. This is a positive integer-valued random variable whose mass function is
\[
\mathbb P (T^*=m) := \frac{m \mathbb P (T_2=m)}{\mathbb E [T_2]}, m\in \mathbb N.
\]
Let $U$ be uniformly distributed on $\{1,2,...,T^*\}$ and set $V=T^*-U$ (note that it is uniform on $\{0,1,...,T^*-1\}$). 
Let $\tilde X$ be a copy of $X$ independent of anything else and let $\tilde \tau_1,\tilde \tau_2,...$ be the corresponding regeneration times. Finally, a stationary version of $X$ is defined by:
\[
\bar X_n:= 
\begin{cases}
  \tilde X_{\tilde \tau_1+ V+ n}, & \text{if }n\in\{0,1,..,U\},\\ 
  \tilde X_{\tilde \tau_1 + T^*}+ X_{\tau_1,\tau_1+n-U}, & \text{if } n\in\{U,U+1,...\}.   
\end{cases}
\]
By construction, $\bar X=(\bar X_{k})_{k\ge 0}$ is a delayed regenerative process. Let
\[
\Xi(X)(m,n)=\sup \big\{|X_{k,\ell}|: m\le k\le \ell \le n \big\}.
\]
Assume the process $X$ satisfies the conditions of \cite[Theorem 1.5]{orenshtein2021rough} and assume also the moment condition $\mathbb E[ T_2^2\Xi(X)(\tau_1,\tau_2)^{\otimes 2}] < \infty$. Then the iterated invariance principle holds for $\bar X$ with the same characteristics as $X$. This is an immediate application of \cite[Theorem 1.5]{orenshtein2021rough} to the process $(\bar X_{k})_{k\ge 0}$, after verifying the moment assumption for the delay epoch of $\bar X$.

 To obtain the formulae of \cite{orenshtein2021rough} note that in this case $\bar\xi(n)=\bar X_{n,n+1}$ and
  \[ 
  \bar \Delta (n) : = \tmmathbf{E} [\bar \xi(0) \otimes \bar \xi(n)]. 
  \]
For $\xi_{\tau_1}(n)=X_{\tau_1+n,\tau_1+n+1}$ we have
\begin{align*}  
   \sum_{n = 1}^{\infty}   \bar \Delta (n) 
   & = \lim_{N\to\infty} \mathbb{E} [\bar \xi (0) \otimes \bar X_{1,N}] \\
   & = \mathbb{E} [\bar \xi (0) \otimes \bar X_{1,U}] \\
   & = \frac{1}{\mathbb E [T_2]}\sum_{m=1}^\infty \mathbb{E} \left[ \sum_{k=0}^{m-1} \sum_{\ell=k+1}^{m-1}  \xi_{\tau_1} (k) \otimes \xi_{\tau_1} (\ell)\right] \mathbb P (T_2=m)   \\
   & = \frac{1}{\mathbb E [T_2]}\mathbb{E} \left[ \sum_{\tau_1\le k<\ell <\tau_2}  \xi (k) \otimes \xi (\ell)\right].
   \end{align*}
Similarly, we find 
  \begin{align*}
    \tmop{Anti} \left( \sum_{n =1}^{\infty} \bar\Delta (n) \right) 
    & =
    \frac{1}{\mathbb E [T_2]} 
       \mathbb E
   \left[ \tmop{Anti} \left(  \sum_{\tau_1\le k<\ell <\tau_2}  \xi (k) \otimes \xi (\ell)\right)\right]  
  \end{align*}
 and 
  \begin{align*}
\bar\Delta (0) + 2 \sum_{n = 1}^{\infty} \tmop{Sym} (\bar\Delta (n))
& = 
    \frac{1}{\mathbb E [T_2]} 
       \mathbb E
   \bigg[ \sum_{\tau_1\le k,\ell <\tau_2}  \xi (k) \otimes \xi (\ell)\bigg]  
\\
& = 
   \frac{1}{\mathbb E [T_2]}  \mathbb E
   \big [X_{\tau_1,\tau_2} \otimes X_{\tau_1,\tau_2}\big ].
\end{align*}

\begin{remark}
In \cite{orenshtein2021rough} the moment condition for $X$ is $\mathbb E[ T_{k+1}\Xi(X)(\tau_{k},\tau_{k+1})^p)^{\otimes 2}] < \infty$ for $k=0,1$ and $p=0,1$. However, the moment condition for the delay (that is, for the case $k=0$) can be reduced to $\mathbb E [ \tau_1^{\epsilon}\Xi(X)(0,\tau_{1})^{\epsilon p})^{\otimes 2}]<\infty$ for some $\epsilon>0$ without an essential change in the proof. In particular, for the stationary version, the assumption $\mathbb E[ T_2^{1+\epsilon}\Xi(X)(\tau_1,\tau_2)^{p})^{\otimes 2}] < \infty$ for $p\in\{0,1\}$, which we assumed for $\epsilon=1$ is reduced to any $\epsilon>0$.   
\end{remark}

\end{example}

\subsection{Continuous iterated invariance principles}

\subsubsection{Invariance principles under continuous time assumptions}
Consider a $d$-dimensional continuous-time process $\Xi = \Xi (t) = \Xi (t ;
\omega)$ with rescaled integrals
\begin{equation} \label{eq:bar_S_N}
     \bar{S}_N (t) = \frac{1}{N^{1 / 2}} \int_0^{N t} \Xi (s) \dd s
\end{equation}
canonically enhanced with the
iterated integrals (that is, the integration is taken in the sense of Riemann-Stieltjes), denoted by $\bar{\mathbb{S}}_N$. 

\begin{theorem}
\label{thm:continuous_time}
Assume that $\Xi$ is stationary and that the iterated invariance principle holds for $(\bar S_N,\bar{\mathbb{S}}_N)$ (see \eqref{eq:bar_S_N})
with a Brownian
rough path limit $(\bar{B}, \bar{\mathbb{B}})$
with characteristics $(\bar{\Sigma},
\bar{\Gamma}).$ 
Assume that $\int_0^{\infty} \bar{\Delta} (s) \dd s$ is well-defined and finite, where $\bar{\Delta} (s) : = \tmmathbf{E} (\Xi (0) \otimes \Xi (s))$. Then 
  necessarily we have the characteristics
  \[ \bar{\Sigma} = 2 \tmop{Sym} \left( \int_0^{\infty} \bar{\Delta} (s) \dd s
     \right), \qquad \bar{\Gamma} = \int_0^{\infty} \bar{\Delta} (s) \dd s. \]
  As a consequence, we have the area correction \ \
  \[ \bar{\mathbb{B}} (1) = \int_0^1 B \otimes \circ \dd B + \tmop{Anti} \left(
     \int_0^{\infty} \bar{\Delta} (s) \dd s \right) . \]
\end{theorem}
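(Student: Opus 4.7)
The plan is to mirror the computation from the proof of Theorem~\ref{thm:Main_thm_discrete}, replacing sums with integrals and leveraging continuous-time stationarity of $\Xi$ together with Fubini's theorem.

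First I would compute the second moment of $\bar S_N(1)$. By definition and stationarity,
\[
\tmmathbf{E}(\bar S_N(1) \otimes \bar S_N(1)) = \frac{1}{N}\int_0^N \int_0^N \tmmathbf{E}(\Xi(s) \otimes \Xi(u))\, \dd s\, \dd u.
\]
Splitting into the regions $\{s \le u\}$ and $\{s > u\}$, using that the integrand equals $\bar\Delta(u-s)$ in the first and $\bar\Delta(s-u)^T$ in the second, then changing variables to $r = |u-s|$ and applying Fubini reduces this to
\[
\int_0^N (1 - r/N)\bigl[\bar\Delta(r) + \bar\Delta(r)^T\bigr]\, \dd r.
\]
By the integrability hypothesis on $\bar\Delta$, dominated convergence gives the limit $2\tmop{Sym}\bigl(\int_0^\infty \bar\Delta(r)\, \dd r\bigr)$.

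Next I would compute $\tmmathbf{E}(\bar{\mathbb S}_N(1))$. Expanding the iterated Riemann--Stieltjes integral yields $\bar{\mathbb S}_N(1) = \frac{1}{N}\int_0^N \int_0^u \Xi(s) \otimes \Xi(u)\, \dd s\, \dd u$, and the analogous change of variables $r = u-s$ followed by swapping the order of integration produces
\[
\tmmathbf{E}(\bar{\mathbb S}_N(1)) = \int_0^N (1 - r/N)\, \bar\Delta(r)\, \dd r,
\]
which tends to $\int_0^\infty \bar\Delta(r)\, \dd r$ by dominated convergence. Invoking the convergence-of-moments part of Definition~\ref{def:main}, these two limits must coincide with $\bar\Sigma$ and $\bar\Gamma$ respectively, establishing the Green--Kubo formulas.

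Finally, the area correction follows purely algebraically, exactly as in the discrete Wong--Zakai case: writing $\bar{\mathbb B}(1) = \int_0^1 \bar B \otimes \circ\, \dd \bar B - \tfrac{1}{2}\bar\Sigma + \bar\Gamma$ and noting that $\bar\Gamma - \tfrac{1}{2}\bar\Sigma = \tmop{Anti}\bigl(\int_0^\infty \bar\Delta(s)\, \dd s\bigr)$ by the formulas just derived. The only technical obstacle is the justification of the limit exchange, which is the continuous-time analog of ``summable correlations'' and is handled immediately by dominated convergence once $\bar\Delta$ is integrable on $[0,\infty)$; otherwise the proof is a direct transcription of the discrete argument.
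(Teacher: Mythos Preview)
Your proof is correct and follows essentially the same approach as the paper's: both reduce the second moment and the expected iterated integral to the double integral over $[0,N]^2$ (resp.\ the triangle), exploit stationarity to pass to the autocorrelation $\bar\Delta$, and then take the limit using integrability of $\bar\Delta$. You are simply more explicit about the change of variables producing the $(1-r/N)$ kernel and about invoking dominated convergence, whereas the paper writes these steps with the $\approx$ shorthand and a reference to the discrete argument; the algebraic identification $\bar\Gamma-\tfrac{1}{2}\bar\Sigma=\tmop{Anti}\bigl(\int_0^\infty\bar\Delta(s)\,\dd s\bigr)$ is identical.
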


\begin{proof}
As in the proof of the Theorem \ref{thm:Main_thm_discrete} dealt with the discrete case (e.g., by considering time integrals instead of sums) we use stationarity and integrable correlations and the fact that $\tmmathbf{E} (\bar{S}_N (1)\otimes \bar{S}_N (1)) \approx \tmmathbf{E} ( \bar{B} (1)\otimes \bar{B} (1) )$ to obtain that
\begin{align*}
    \bar{\Sigma} = \tmmathbf{E} (\bar{B} (1)\otimes \bar{B} (1) ) &\approx \frac{1}{N} \int_{0 \leqslant s, s' \leqslant N }
   \tmmathbf{E} (\Xi (s) \otimes \Xi (s')) \dd s \dd s' \\ &\approx \int_0^{\infty}
   \tmmathbf{E} (\Xi (0) \otimes \Xi (s) + \Xi (s) \otimes \Xi (0)) \dd s. 
   \end{align*} 
Hence, in terms of the correlations this becomes
\[ \bar{\Sigma} = 2 \int_0^{\infty} \tmop{Sym} (\bar{\Delta} (s)) \dd s, \]
the Green-Kubo formula. On the other hand, since $\bar{\Gamma} = \tmmathbf{E} (\bar{\mathbb{B}} (1)) \approx \tmmathbf{E} ( \bar{\mathbb{S}}_N (1))$, where

\[ \bar{\mathbb{S}}_N (1) = \frac{1}{N} \int_{0 \leqslant s < s' \leqslant N
   } \Xi (s) \otimes \Xi (s') \dd s \dd s', \]
we have
\[ \bar{\Gamma} \approx \frac{1}{N} \int_{0 \leqslant s < s' \leqslant N }
   \tmmathbf{E} (\Xi (s) \otimes \Xi (s')) \dd s \dd s' \approx \int_0^{\infty}
   \tmmathbf{E} (\Xi (0) \otimes \Xi (s)) \dd s = \int_0^{\infty} \bar{\Delta}
   (s) \dd s. \]
  Next, since
  \[ \bar{\mathbb{B}} (1) \equiv \int_0^1 B \otimes \dd B + \bar{\Gamma} =
     \left( \int_0^1 B \otimes \circ \dd B - \frac{1}{2} \bar{\Sigma} \right) +
     \bar{\Gamma} \]
  and 
  \[ \bar{\Gamma} - \frac{1}{2} \bar{\Sigma} = \int_0^{\infty} (\bar{\Delta}
     (s) - \tmop{Sym} (\bar{\Delta} (s))) \dd s = \int_0^{\infty} \tmop{Anti}
     (\bar{\Delta} (s)) \dd s, \]
  the proof is finished.
\end{proof}

\begin{example}\label{ex:FGL}
  Friz-Gassiat-Lyons 2015 \cite{friz2015physical}, or Ch.3 in Friz-Hairer 14/20 \cite{friz2020course}. Here $\Xi = \Xi (t ;
  \omega)$ is a multidimensional OU process (denoted $\tilde{Y}$ in \cite{friz2020course}) with
  dynamics
  \[ \dd \Xi = - M \Xi \dd t + \dd B ; \]
  a stationary solution exists under a spectral gap condition and is
  explicitly given by
  \[ \Xi (s) = \int_{- \infty}^s e^{- M (s - r)} \dd B (r) . \]
  This allows to compute $\bar{\Delta} (s) : = \tmmathbf{E} (\Xi (0) \otimes
  \Xi (s))$. By It\^{o} isometry,
  \[ \quad \bar{\Delta} (s) = \int_{- \infty}^0 e^{- M (0 - r)} e^{- M^{\star}
     (s - r)} \dd r = : \Sigma_{\tmop{OU}} e^{- M^{\star} s} \]
  and, since $\Sigma_{\tmop{OU}}$ is symmetric,
  \[ (\star) : \int_0^{\infty} \bar{\Delta} (s) \dd s = \Sigma_{\tmop{OU}}
     (M^{\star})^{- 1} \]

  Recall that $\bar{S}_N (t) = \frac{1}{N^{1 / 2}} \int_0^{N t} \Xi (s) \dd s$
  represents physical Brownian motion with mass $1 / N$.
  
  By Theorem~\ref{thm:continuous_time}, the limit Brownian covariance equals
  \[ 2 \tmop{Sym} \left( \int_0^{\infty} \bar{\Delta} (s) \dd s \right) = \Sigma_{\tmop{OU}}
     (M^{\star})^{- 1} + (M )^{- 1} \Sigma_{\tmop{OU}} 
     \]
  while we have area correction given by
  \[ \tmop{Anti} \left( \int_0^{\infty} \bar{\Delta} (s) \dd s \right) =
     \frac{1}{2} [\Sigma_{\tmop{OU}} (M^{\star})^{- 1} - (M )^{- 1} \Sigma_{\tmop{OU}}] \]
  which vanishes iff $M$=$M^{\star}$. 




\end{example}

\begin{example}
  Deuschel-Orenshtein-Perkowski 
  \cite{deuschel2021additive} consider
  \[ \Xi (s) = F (X (s)), \]
  for some Markov process $X$ on some Polish space with generator $\mathcal{L}$ on some space $M$ and a probability measure $\pi$ which is invariant and ergodic for both $\mathcal{L}$ and $\mathcal{L}^*$, see the precise formulation in \cite[Chapter 3]{deuschel2021additive}. Assume that
  $\tmmathbf{E} F (X (0)) = 0$ and that exists a solution in $L^2$ to the Poisson equation $-\mathcal{L} \phi = F$, they first recall
  (cf. p.7) validity of functional CLT, with ``half-covariance'' given by
  \[ \frac{1}{2} \bar{\Sigma} = \tmmathbf{E}  (\phi (X (0)) \otimes
     ((-\mathcal{L}_S) \phi) (X (0))) \]
  and then establish (p.6, Theorem 3.3) an area correction of the form
  \[ \tmmathbf{E}  (\phi (X (0)) \otimes (\mathcal{L}_A \phi) (X (0))
     \nobracket . \]
  Let us illustrate how this can be derived from Theorem~\ref{thm:continuous_time}. We start by
  computing
  \[ \bar{\Gamma} \hspace{1em} \equiv \int_0^{\infty} \bar{\Delta} (s) \dd s
     \equiv \int_0^{\infty} \tmmathbf{E} (\Xi (0) \otimes \Xi (s)) \dd s \]
  using the Markov structure at hand. With $\Xi (s) = F (X (s))$ we have
  \begin{eqnarray*}
    \bar{\Gamma} = \int_0^{\infty} \left( \int_M F \otimes e^{s\mathcal{L}} F
    \dd \pi \right) \dd s & = & \int_M \left( F \otimes \left( \int_0^{\infty}
    e^{s\mathcal{L}} F \dd s \right) \dd \pi \right)\\
    & = & \int_M (F \otimes (-\mathcal{L}^{- 1} F) \dd \pi)\\
    & = & \int_M (-\mathcal{L} \phi \otimes \nobracket \phi ) \dd \pi
    \nobracket .
  \end{eqnarray*}
  The last two steps are justified when $\mathcal{L}$ has a spectral gap and
  $-\mathcal{L} \phi = F$ is uniquely solvable. Decomposing $\mathcal{L}=\mathcal{L}_S +\mathcal{L}_A =
  \frac{1}{2} (\mathcal{L}  +\mathcal{L}^{\star}) + \frac{1}{2} (\mathcal{L} 
  -\mathcal{L}^{\star})$ into symmetric and antisymmetric operators, we see
  \[ \tmop{Sym} (\bar{\Gamma}) = \tmop{Sym} \left( \int_M (-\mathcal{L} \phi
     \otimes \nobracket \phi ) \dd \pi \nobracket \right) = \int_M
     (-\mathcal{L}_S \phi \otimes \nobracket \phi ) \dd \pi = \nobracket \int_M
     (\phi \otimes -\mathcal{L}_S \nobracket \phi ) \dd \pi \nobracket \]
  and
  \[ \tmop{Anti} (\bar{\Gamma}) = \tmop{Anti} \left( \int_M (-\mathcal{L} \phi
     \otimes \nobracket \phi ) \dd \pi \nobracket \right) = \int_M
     (-\mathcal{L}_A \phi \otimes \nobracket \phi ) \dd \pi = \nobracket \int_M
     (\phi \otimes \mathcal{L}_A \nobracket \phi ) \dd \pi \nobracket . \]
  These expressions agree exactly with the half-covariance and area correction
  given by Deuschel-Orenshtein-Perkowski \cite{deuschel2021additive}. The more general case, Theorem 3.3 of \cite{deuschel2021additive}, is a Kipnis-Varadhan type invariance principle, where the assumption of existence of solution to the Poisson equation is weaken to a condition on the behavior of the solution $\pi_\lambda$ to the resolvent equation $(\lambda-\mathcal{L}) \phi_\lambda = F$ for small $\lambda$, known as the $H^{-1}$ condition (for the precise formulation see line (3.1) of \cite{deuschel2021additive}). In those settings $\bar{\Gamma}$ is obtained by the limit as $\lambda \downarrow 0$ of the Laplace transform of the correlations. More precisely,     
    \begin{eqnarray*}
    \bar{\Gamma} & = & 
    \lim_{\lambda \downarrow 0 }\int_0^{\infty} e^{-\lambda s}\tmmathbf{E} (\Xi (0) \otimes \Xi (s)) \dd s \\
    & = & 
    \lim_{\lambda \downarrow 0 }\int_0^{\infty} e^{-\lambda s}\int \tmmathbf{E} (\Xi (0) \otimes \Xi (s)|X(0)=x) \dd \pi(x) \dd s 
    \\
    & = & 
    \lim_{\lambda \downarrow 0 }
    \int F(x)  \otimes \left( \int_0^{\infty} e^{-\lambda s}  \tmmathbf{E}(F(X(s))|X(0)=x) \dd s  \right)  \dd \pi(x) \\
    & = & 
    \lim_{\lambda \downarrow 0 }
    \int F \otimes \phi_\lambda  \dd \pi \\
    & = & 
    \lim_{\lambda \downarrow 0 }
    \int (-\mathcal{L} \phi_\lambda \otimes \nobracket \phi_\lambda ) \dd \pi, 
     \end{eqnarray*}
  where the existence of the limits are justified as in \cite{deuschel2021additive}, in particular the last equality follows as the one in pp.\ 1463 second line in that paper.  
\end{example}

\begin{example} (Example
 \ref{ex:FGL} revisited)
 We can view Example
 \ref{ex:FGL} from a Markov / Deuschel-Orenshtein-Perkowski \cite{deuschel2021additive}
  perspective by considering the generator 
  $$\mathcal{L}f(x)= - M x\cdot \nabla f(x) + \frac{1}{2} \Delta f(x)$$
  for $f \in C^2(\mathbb R^d, \mathbb R^d)$, where the operator acts in the usual way in each component. The
  Poisson equation
  \[ -\mathcal{L} \phi  = x \in \mathbb{R}^d \]
  is solved by $\phi (x)  = M^{- 1} x$
  centred w.r.t.\ the invariant measure $\pi = N (0, \Sigma_{\tmop{OU}}
  \nobracket$). Note that $\mathcal{L}$ is self-adjoint if and only if $M$ is symmetric and
  \[ \begin{array}{ll}
       & \int  (-\mathcal{L} \phi \otimes \nobracket \phi ) \dd \pi \nobracket
     \end{array} = \int  (x \otimes \nobracket M^{- 1} x ) \dd \pi \nobracket =
     \int  (x \nobracket x^{\star} ) \dd \pi \nobracket (M^{\star})^{- 1} =
     \Sigma_{\tmop{OU}} (M^{\star})^{- 1} \]
  which is precisely $(\star)$ above and in particular
    \[ \tmop{Anti} (\bar{\Gamma}) = \tmop{Anti} \left( \int_M (-\mathcal{L} \phi
     \otimes \nobracket \phi ) \dd \pi \nobracket \right) 
     = 
     \nobracket \int_M
     (\phi \otimes \mathcal{L}_A \nobracket \phi ) \dd \pi \nobracket  \]
vanishes if $\mathcal{L}$ is self-adjoint, which holds if and only if $M=M^*$. 
\end{example}
  
\subsubsection{Invariance principles for suspension flows under discrete time assumptions}

We consider the following setting (cf.~\cite{friz2021almost}) as specification of the previous section, allowing for weaker assumptions on the integrability of  correlations $\Delta(t)$. 


Take a complete probability space $(\Omega, \mathcal F, \mathbb P)$ with $\mathbb P$-preserving transformation $\theta: \Omega \to \Omega$ and a \emph{roof function} $\tau: \Omega \to (0, \infty)$ which is bounded from below and above, i.e.~
$$ L^{-1} \leq \tau \leq L, \quad \text{ for some } L > 0.$$
We consider the probability space $(\bar \Omega, \bar{\mathcal F}, \bar{\mathbb P})$ with
$$ \bar \Omega = \{ (\omega, t) \, : \, \omega \in \Omega, \, 0 \leq t \leq \tau(\omega), \, (\omega, \tau(\omega)) = (\theta \omega, 0) \}, $$
where $\bar{\mathcal F}$ is the restriction to $\bar \Omega$ of $\mathcal F \times \mathcal B([0, L])$ and $B([0, L])$ denotes the Borel $\sigma$-algebra. Denoting $\bar \tau = \int \tau \dd \mathbb P$, the measure $\bar{\mathbb P}$ is given, for any $ A \in \bar{\mathcal F}$, by
$$ \bar{\mathbb P}(A) =  \frac{1}{\bar \tau} \int_{\bar \Omega} \mathds 1_A (\omega, t) \dd \mathbb P(\omega) \dd t.  $$
Now we can introduce the $d$-dimensional continuous-time process $\Xi$ on $\bar \Omega$ as
\begin{align*}
    \Xi (t; (\omega, s)) = \Xi(t+s; (\omega, 0)) = \Xi(0; (\omega, t+s)) \ \text { if } 0 \leq t+s < \tau(\omega), \\
    \Xi(t; (\omega, s)) = \Xi(0; (\theta^k \omega, u)), \text { if } t+s = u + \sum_{j=0}^k \tau(\theta^j \omega), \, 0 \leq u < \tau (\theta^k \omega).
\end{align*}
Note that the \emph{suspension} $\Xi$ with respect to the map $\theta$ is a stationary process on $(\bar \Omega, \bar{\mathcal F}, \bar{\mathbb P})$, writing $\Xi(t; \omega)$ for $\Xi(t; (\omega, 0))$.

We can now introduce $\xi(0; \omega) = \int_0^{\tau(\omega)} \Xi(s; \omega) \dd s$ and $\xi(k; \omega) = \xi(0; \theta^k(\omega))$ for $k \geq 0$ and take the partial sums $(S_N, \mathbb{S}_N)$ for $\xi$ as in~\eqref{eq:S_N} and the partial sums $(\bar{S}_N, \bar{\mathbb{S}}_N)$ for $\Xi$ as in~\eqref{eq:bar_S_N}. We consider the following situation (which is, for example, satisfied in \cite[Theorem 6.1]{kelly2016smooth} or \cite[Theorem 2.5]{friz2021almost}), again denoting $\Delta (n)  = \tmmathbf{E} (\xi (0) \otimes \xi (n))$:
\begin{assumption}
 \begin{itemize}
     \item[(a)] The iterated invariance principle holds for $(S_N, \mathbb{S}_N)$ with Brownian rough path limit $(B, \mathbb B)$ and characteristics
 \[ \Sigma = \Delta (0) + 2 \tmop{Sym} (  \Gamma), \qquad \Gamma = \sum_{n = 1}^{\infty} \Delta (n)  \]   
     according to Theorem~\ref{thm:Main_thm_discrete}, i.e.~the summability assumption is satisfied.

     \item[(b)] The iterated invariance principle holds for $(\bar{S}_N, \bar{\mathbb{S}}_N)$ with Brownian rough path limit $(\bar B, \bar{\mathbb B})$ such that
     \begin{align*}
         \bar B &= (\bar \tau)^{-1/2} B, \\
         \bar{\mathbb B} (t) &= \int_0^t B  \otimes \dd B + t (\bar \tau)^{-1} \Gamma + t \int_{\bar \Omega} \left(\int_0^{u} \Xi(s; \omega) \dd s \right) \otimes \Xi (0; \omega) \, \dd \bar{\mathbb P}.
     \end{align*}
 \end{itemize}
\end{assumption}

We can now state our final theorem:
\begin{theorem}
\label{thm:suspension}
  Under Assumption 1, the Brownian rough path limit $(\bar B, \bar{\mathbb{B}})$ has the characteristics
  \[ \bar{\Sigma} = (\bar \tau)^{-1} \Sigma, \qquad \bar{\Gamma} =  (\bar \tau)^{-1} \Gamma + \int_{\bar \Omega} \left(\int_0^{u} \Xi(s; \omega) \dd s \right) \otimes \Xi (0; \omega) \, \dd \bar{\mathbb P}, \]
  and  the area correction 
  \[ \bar{\mathbb{B}} (1) = \int_0^1 B \otimes \circ \dd B +  \tmop{Anti} \left( \bar \Gamma \right)
     .
     \]
\end{theorem}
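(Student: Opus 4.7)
The plan is to read the characteristics $(\bar\Sigma,\bar\Gamma)$ off Assumption 1(b) and then translate the Itô-form expression for $\bar{\mathbb B}(1)$ into Stratonovich form. From $\bar B=(\bar\tau)^{-1/2}B$ and $\Sigma=\mathbf E(B(1)\otimes B(1))$ we immediately obtain
\[\bar\Sigma=\mathbf E(\bar B(1)\otimes \bar B(1))=(\bar\tau)^{-1}\Sigma.\]
Taking expectations in the formula for $\bar{\mathbb B}(1)$ from Assumption 1(b) and using that the Itô integral has vanishing mean gives
\[\bar\Gamma=\mathbf E(\bar{\mathbb B}(1))=(\bar\tau)^{-1}\Gamma+J,\qquad J:=\int_{\bar\Omega}\Big(\int_0^u \Xi(s;\omega)\,\dd s\Big)\otimes \Xi(0;\omega)\,\dd\bar{\mathbb P}.\]

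For the area-correction formula, I apply the standard Itô--Stratonovich identity to the Brownian rough path of Assumption 1(b), whose Brownian component has covariance $\bar\Sigma$:
\[\bar{\mathbb B}(1)=\int_0^1 B\otimes \dd B+\bar\Gamma=\int_0^1 B\otimes \circ \dd B+\bar\Gamma-\tfrac12\bar\Sigma=\int_0^1 B\otimes \circ \dd B+\tmop{Anti}(\bar\Gamma)+\big(\tmop{Sym}(\bar\Gamma)-\tfrac12\bar\Sigma\big).\]
Hence the area-correction formula reduces to the algebraic identity $\tmop{Sym}(\bar\Gamma)=\tfrac12\bar\Sigma$. Substituting the two expressions just derived together with the Green--Kubo identity $\Sigma=\Delta(0)+2\tmop{Sym}(\Gamma)$ from Assumption 1(a), this in turn reduces to
\[2\tmop{Sym}(J)=(\bar\tau)^{-1}\Delta(0).\]

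The identity $2\tmop{Sym}(J)=(\bar\tau)^{-1}\Delta(0)$ is the genuine content of the theorem and uses the suspension structure in an essential way. Unpacking $\bar{\mathbb P}=(\bar\tau)^{-1}\,\dd\mathbb P(\omega)\,\dd u$ on $\{(\omega,u):0\leq u\leq \tau(\omega)\}$ and applying the flow identification $\Xi(0;(\omega,u))=\Xi(u;(\omega,0))$ rewrites
\[J=\frac{1}{\bar\tau}\int_\Omega\int_0^{\tau(\omega)}\int_0^u \Xi(s;\omega)\otimes \Xi(u;\omega)\,\dd s\,\dd u\,\dd\mathbb P(\omega),\]
which is the integral of $\Xi(s;\omega)\otimes \Xi(u;\omega)$ over the triangle $\{0\leq s<u\leq \tau(\omega)\}$. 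Adding $J^T$ (the integral over the opposite triangle) fills the full square, so by Fubini
\[J+J^T=\frac{1}{\bar\tau}\int_\Omega\Big(\int_0^{\tau(\omega)}\Xi(s;\omega)\,\dd s\Big)\otimes\Big(\int_0^{\tau(\omega)}\Xi(u;\omega)\,\dd u\Big)\,\dd\mathbb P=(\bar\tau)^{-1}\mathbf E(\xi(0)\otimes \xi(0))=(\bar\tau)^{-1}\Delta(0),\]
using $\xi(0;\omega)=\int_0^{\tau(\omega)}\Xi(s;\omega)\,\dd s$. Since $J+J^T=2\tmop{Sym}(J)$, the argument closes. The main obstacle is the careful interpretation of $\Xi(0;\cdot)$ on $\bar\Omega$ via the suspension identification, which is precisely what turns the one-sided integral defining $J$ into half of the two-sided integral recovering $\Delta(0)$.
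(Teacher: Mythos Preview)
Your proof is correct and follows essentially the same route as the paper's: both read off $\bar\Sigma$ and $\bar\Gamma$ directly from Assumption 1(b), convert Itô to Stratonovich, reduce the area-correction formula to the identity $2\tmop{Sym}(J)=(\bar\tau)^{-1}\Delta(0)$, and verify the latter by the triangle-versus-square decomposition using the suspension identification $\Xi(0;(\omega,u))=\Xi(u;\omega)$. The only cosmetic difference is direction: the paper starts from $\Delta(0)$ and splits the square into two triangles to recognise $\bar\tau(J+J^T)$, while you start from $J$, add $J^T$, and reassemble the square.
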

\begin{proof}
The expression for the covariance matrix follows directly from the assumption.
Concerning the area correction: as before, we need to compute $\bar \Gamma - \frac{1}{2} \bar \Sigma$. We have
\begin{align*}
    \bar \Gamma - \frac{1}{2} \bar \Sigma &= (\bar \tau)^{-1} \left(\sum_{n=1}^{\infty} \Delta(n) - \tmop{Sym}(\Delta(n)) - \frac{1}{2} \Delta(0) \right) \\
    &+ \int_{\bar \Omega} \left(\int_0^{u} \Xi(s; \omega) \dd s \right) \otimes \Xi (0; \omega) \, \dd \bar{\mathbb P}(\omega, u) \\
    &= (\bar \tau)^{-1}  \tmop{Anti} \left( \Gamma \right) \\
    &+ \left(\int_{\bar \Omega} \left(\int_0^{u} \Xi(s; \omega) \dd s \right) \otimes \Xi (0; \omega) \, \dd \bar{\mathbb P}(\omega, u) - \frac{1}{2} (\bar \tau)^{-1}  \Delta(0)  \right).
\end{align*} 
It remains to show that 
$$ (\bar \tau)^{-1}  \Delta(0)  = 2 \tmop{Sym} \left( \int_{\bar \Omega} \left(\int_0^{u} \Xi(s; \omega) \dd s \right) \otimes \Xi (0; \omega) \, \dd \bar{\mathbb P}(\omega, u)  \right).  $$
We observe this as follows:
\begin{align*}
    \Delta (0) &= \tmmathbf{E} (\xi (0) \otimes \xi (0) ) = \int_{\Omega} \left(\int_0^{\tau(\omega)} \Xi(u; \omega) \dd u \right) \otimes \left(\int_0^{\tau(\omega)} \Xi(s; \omega) \dd s \right) \dd \mathbb P \\
    &= \int_{\Omega} \left(\int_0^{\tau(\omega)} \Xi(u; \omega) \dd u \right) \otimes \left( \int_0^{u} \Xi(s; \omega) \dd s + \int_u^{\tau(\omega)} \Xi(s; \omega) \dd s \right) \dd \mathbb P \\
    &= \int_{\Omega} \int_0^{\tau(\omega)} \Xi(u; \omega) \otimes \left( \int_0^{u} \Xi(s; \omega) \dd s \right) \dd u \, \dd \mathbb P \\
    &+ \int_{\Omega} \int_0^{\tau(\omega)} \left( \int_0^{s} \Xi(u; \omega) \dd u \right) \otimes \Xi(s; \omega) \, \dd s \, \dd \mathbb P \\
    &= \int_{\Omega}  \Xi(0; \omega) \otimes \left( \int_0^{u} \Xi(s; \omega) \dd s \right) \dd \bar{\mathbb P} + \int_{\Omega} \left( \int_0^{s} \Xi(u; \omega) \dd u \right) \otimes \Xi(0; \omega) \, \dd \bar{\mathbb P}.
\end{align*}
This concludes the proof.
\end{proof}

\begin{example}
Again, we refer to Kelly-Melbourne  \cite{kelly2016smooth, kelly2017deterministic} and Chevyrev et al.~\cite{chevyrev2019multiscale}.
  Theorem~\ref{thm:continuous_time} applies with
  \[ \bar{\Delta} (s) : = \tmmathbf{E} (\Xi (0) \otimes \Xi (s)) = \int (v
     \otimes (v \circ \phi_s)) \dd \nu , \]
  for a flow $\phi$ on some compact manifold $M$ with invariant and ergodic measure $\nu$ on a hyperbolic invariant set $\Omega$, and for a $\nu$-centered H\"older observable $v : \Omega \rightarrow
  \mathbb{R}^d$. The probability space is thus $(\Omega, \nu)$ and $\Xi (s ;
  \omega) = (v \circ \phi_s) (\omega)$. In particular, the covariance and area
  corrections from Theorem~\ref{thm:continuous_time} agree with those given in \cite[Theorem 1.1 (b) and Corollary 8.3]{kelly2016smooth},
  whenever these integrals are finite. 
  The key examples are semi-flows which are suspensions of the uniformly and non-uniformly expanding maps mentioned in Example 1, and flows that are suspensions of uniformly and
non-uniformly hyperbolic diffeomorphisms. This includes Axiom A flows, which are typically mixing for $v \in C^{\infty}$ \cite{fieldmelbournetoeroek07} and by that satisfy the assumptions for Theorem~\ref{thm:continuous_time}, and non-uniformly hyperbolic flows constructed as suspensions
over Young towers with exponential tails. 

However, note that the formulas in Theorem~\ref{thm:continuous_time} are only valid if the (semi-)flows are (sufficiently fast) mixing themselves; otherwise the covariance and area correction can be given in terms of iterations of the associated Poincar\'{e} map $T:\Lambda \to \Lambda$, with ergodic, invariant measure $\mu$ on $\Lambda$, precisely in terms of Theorem~\ref{thm:suspension}, assuming that $T$ is mixing such that Theorem~\ref{thm:Main_thm_discrete} holds  (see also \cite[Corollary 8.1]{kelly2016smooth}); in more detail, 
$$ \bar \Omega = \{ (\omega, t) \, : \, \omega \in \Lambda, \, 0 \leq t \leq \tau(\omega), \, (\omega, \tau(\omega)) = (T \omega, 0) \}, $$
such that $\nu = (\bar \tau)^{-1} (\mu \times \text{Lebesgue})$ is an ergodic invariant probability measure for $\phi_s(\omega,u) = (\omega, u+s)$, where $\bar \tau = \int_{\Lambda} \tau \dd \mu$. 
Then the summability condition applies to
 \[ \Delta (n) : = \tmmathbf{E} (\xi (0) \otimes \xi (n)) = \int (\bar v \otimes
     (\bar v \circ T^n)) \dd \mu, \]
  where 
  $$\bar v(\omega) = \int_0^{\tau(\omega)} v(\omega, s) \dd s = \int_0^{\tau(\omega)} (v \circ \phi_0) (\omega, s)  \dd s =  \int_0^{\tau(\omega)} \Xi (s; \omega)  \dd s $$ 
  and the stationary processes are $\xi (n ; \omega) = (\bar v \circ T^n) (\omega)$ and $\Xi(s;\omega) = (v \circ \phi_s) (\omega, 0) =  (v \circ \phi_0) (\omega, s) $.
\end{example}

\begin{example}
    Friz-Kifer 2021 \cite{friz2021almost}.
    Rescaling time via $ t \to \bar \tau t$, the process $V^{\varepsilon}$ therein fulfills Assumption 1 according to  \cite[Theorem 2.5]{friz2021almost}; note that our notation $\xi$ corresponds with their $\eta$ and our notation $\Xi$ with their $\xi$.
   Theorem~\ref{thm:suspension} can be applied to obtain the area correction. 
    Note that in \cite{friz2021almost} the term 
    $$\int_{\bar \Omega} \left(\int_0^{u} \Xi(s; \omega) \dd s \right) \otimes \Xi (0; \omega) \, \dd \bar{\mathbb P}$$
    as a summand in $\bar{\mathbb B}(1)$, and thereby also in $\bar \Gamma$,
    is expressed as
    $$ \tmmathbf{E} \left( \int_0^{\tau(\omega)} \Xi(s; \omega) \dd s \otimes \int_0^{s} \Xi (u; \omega) \, \dd u \right),$$
    i.e.~in terms of the invariant measure $\mathbb P$ for the discrete time system.
    
\end{example}

{\centering 
\subsubsection*{Acknowledgements}}
ME acknowledges the support of Deutsche Forschungsgemeinschaft (DFG) through CRC 1114 and under Germany's Excellence Strategy -- The Berlin Mathematics Research Center MATH+ (EXC-2046/1, project 390685689); and additionally thanks the DFG-funded SPP 2298 and the Einstein Foundation for supporting his research.
PFK acknowledges the support of Deutsche Forschungsgemeinschaft (DFG) through a MATH+ Distinguished Fellowship. The research of TO was partially sponsored by INDAM—GNAMPA Project codice CUP-E53C22001930001. 


\bibliographystyle{plain} 
\bibliography{IIP-bib} 

\begin{thebibliography}{10}

\bibitem{breuillard2009random}
Emmanuel Breuillard, Peter~K Friz, and Martin Huesmann.
\newblock From random walks to rough paths.
\newblock {\em Proceedings of the American Mathematical Society},
  137(10):3487--3496, 2009.

\bibitem{chevyrev2019canonical}
Ilya Chevyrev and Peter~K Friz.
\newblock Canonical rdes and general semimartingales as rough paths.
\newblock {\em The Annals of Probability}, 47(1):420--463, 2019.

\bibitem{chevyrev2019multiscale}
Ilya Chevyrev, Peter~K Friz, Alexey Korepanov, Ian Melbourne, and Huilin Zhang.
\newblock Multiscale systems, homogenization, and rough paths.
\newblock In {\em Probability and Analysis in Interacting Physical Systems: In
  Honor of SRS Varadhan, Berlin, August, 2016}, pages 17--48. Springer, 2019.

\bibitem{chevyrev2022deterministic}
Ilya Chevyrev, Peter~K Friz, Alexey Korepanov, Ian Melbourne, and Huilin Zhang.
\newblock Deterministic homogenization under optimal moment assumptions for
  fast--slow systems. part 2.
\newblock {\em Annales de l'Institut Henri Poincare (B) Probabilites et
  statistiques}, 58(3):1328--1350, 2022.

\bibitem{deuschel2021additive}
Jean-Dominique Deuschel, Tal Orenshtein, and Nicolas Perkowski.
\newblock Additive functionals as rough paths.
\newblock {\em The annals of probability}, 49(3):1450--1479, 2021.

\bibitem{doukhan1994functional}
Paul Doukhan, Pascal Massart, and Emmanuel Rio.
\newblock The functional central limit theorem for strongly mixing processes.
\newblock {\em Annales de l'IHP Probabilit{\'e}s et statistiques},
  30(1):63--82, 1994.

\bibitem{ekisheva2006transportation}
Svetlana Ekisheva and Christian Houdr{\'e}.
\newblock Transportation distance and the central limit theorem.
\newblock {\em arXiv preprint math/0607089}, 2006.

\bibitem{fieldmelbournetoeroek07}
Michael Field, Ian Melbourne, and Andrei T\"{o}r\"{o}k.
\newblock Stability of mixing and rapid mixing for hyperbolic flows.
\newblock {\em Ann. of Math. (2)}, 166(1):269--291, 2007.

\bibitem{friz2015physical}
Peter~K Friz, Paul Gassiat, and Terry Lyons.
\newblock Physical brownian motion in a magnetic field as a rough path.
\newblock {\em Transactions of the American Mathematical Society},
  367(11):7939--7955, 2015.

\bibitem{friz2020course}
Peter~K Friz and Martin Hairer.
\newblock {\em A course on rough paths}.
\newblock Springer, 2020.

\bibitem{friz2021almost}
Peter~K. Friz and Yuri Kifer.
\newblock {Almost sure diffusion approximation in averaging via rough paths
  theory}.
\newblock {\em Electronic Journal of Probability}, 29(none):1 -- 56, 2024.

\bibitem{friz2006levy}
Peter~K Friz, Terry Lyons, and Daniel Stroock.
\newblock L{\'e}vy's area under conditioning.
\newblock {\em Annales de l'IHP Probabilit{\'e}s et statistiques},
  42(1):89--101, 2006.

\bibitem{friz2018differential}
Peter~K Friz and Huilin Zhang.
\newblock Differential equations driven by rough paths with jumps.
\newblock {\em Journal of Differential Equations}, 264(10):6226--6301, 2018.

\bibitem{Gottwald_2024}
Georg~A Gottwald and Ian Melbourne.
\newblock Time-reversibility and nonvanishing lévy area.
\newblock {\em Nonlinearity}, 37(7):075018, May 2024.

\bibitem{hall1980martingale}
Peter Hall and Christopher~C Heyde.
\newblock {\em Martingale limit theory and its application}.
\newblock Academic press, 1980.

\bibitem{ikeda1981stochastic}
Nobuyuki Ikeda and Shinzo Watanabe.
\newblock {\em Stochastic differential equations and diffusion processes}.
\newblock Elsevier, 1981.

\bibitem{kelly2016smooth}
David Kelly and Ian Melbourne.
\newblock {Smooth approximation of stochastic differential equations}.
\newblock {\em The Annals of Probability}, 44(1):479 -- 520, 2016.

\bibitem{kelly2017deterministic}
David Kelly and Ian Melbourne.
\newblock Deterministic homogenization for fast--slow systems with chaotic
  noise.
\newblock {\em Journal of Functional Analysis}, 272(10):4063--4102, 2017.

\bibitem{kurtz1991wong}
Thomas~G Kurtz and Philip Protter.
\newblock Wong-zakai corrections, random evolutions, and simulation schemes for
  sde's.
\newblock In {\em Stochastic analysis}, pages 331--346. Elsevier, 1991.

\bibitem{lopusanschi2021ballistic}
Olga Lopusanschi and Tal Orenshtein.
\newblock Ballistic random walks in random environment as rough paths:
  convergence and area anomaly.
\newblock {\em ALEA}, 18:945--962, 2021.

\bibitem{melbourne2011note}
Ian Melbourne and Andrew~M Stuart.
\newblock A note on diffusion limits of chaotic skew-product flows.
\newblock {\em Nonlinearity}, 24, 2011.

\bibitem{ney1981refinement}
Peter Ney.
\newblock A refinement of the coupling method in renewal theory.
\newblock {\em Stochastic Processes and their applications}, 11(1):11--26,
  1981.

\bibitem{orenshtein2021rough}
Tal Orenshtein.
\newblock Rough invariance principle for delayed regenerative processes.
\newblock {\em Electronic Communications in Probability}, 26:1--13, 2021.

\bibitem{pavliotis2008multiscale}
Grigorios~A Pavliotis and Andrew~M Stuart.
\newblock {\em Multiscale Methods: Averaging and Homogenization}, volume~53.
\newblock Springer Science \& Business Media, 2008.

\bibitem{stroock2003markov}
Daniel~W Stroock.
\newblock {\em Markov Processes from K. It{\^o}'s Perspective (AM-155)}, volume
  155.
\newblock Princeton University Press, 2003.

\bibitem{thorisson1983coupling}
Hermann Thorisson.
\newblock The coupling of regenerative processes.
\newblock {\em Advances in Applied Probability}, 15(3):531--561, 1983.

\bibitem{thorisson1992construction}
Hermann Thorisson.
\newblock Construction of a stationary regenerative process.
\newblock {\em Stochastic processes and their applications}, 42(2):237--253,
  1992.

\end{thebibliography}

\end{document}